\newcommand{\bn}{{\mathbb{N}}}
\newcommand{\bp}{{\mathbb{P}}}
\newcommand{\bc}{{\mathbb{C}}}
\newcommand{\ca}{{\mathcal{A}}}
\newcommand{\ch}{{\mathcal{H}}}
\newcommand{\ce}{{\mathcal{E}}}
\newcommand{\cv}{{\mathcal{V}}}
\newcommand{\cc}{{\mathcal{C}}}
\newcommand{\co}{{\mathcal{O}}}
\renewcommand{\l}{\lambda}
\newcommand{\s}{\sigma}
\newcommand{\p}{\varphi}
\renewcommand{\o}{\omega}
\newcommand{\g}{\gamma}
\renewcommand{\gg}{\Gamma}
\newcommand{\ag}{A(\gg)\,}
\DeclareMathOperator{\ran}{rank}
\numberwithin{equation}{section}
\newtheorem{theorem}{Theorem}[section]
\newtheorem{proposition}[theorem]{Proposition}
\theoremstyle{definition}
\newtheorem{definition}[theorem]{Definition}
\newtheorem{example}[theorem]{Example}
\begin{document}



\title[Infinite graphs]
{Spectra of infinite graphs via Schur complement}
\author[L. Golinskii]{L. Golinskii}

\address{Mathematics Division, Institute for Low Temperature Physics and
Engineering, 47 Lenin ave., Kharkov 61103, Ukraine}
\email{golinskii@ilt.kharkov.ua}

\date{\today}

\keywords{Infinite graphs; adjacency operator; spectrum; block matrices; Green function}
\subjclass[2010]{Primary: 05C63; Secondary: 05C76, 47B36, 47B15, 47A10}

\maketitle

\begin{abstract}
The goal of the paper is to apply the general operator theoretic construction known as
the Schur complement for computation of the spectrum of certain infinite graphs which
can be viewed as finite graphs with the ray attached to them. The examples of a multiple
star and a flower with infinite rays are considered.
\end{abstract}

\section*{Introduction and preliminaries}
\label{s0}

\subsection{Graph theory}

We begin with some basics of the graph theory. For the sake of simplicity we restrict ourselves
with simple, connected, undirected, finite or infinite (countable) graphs, although
the main result holds for weighted multigraphs and graphs with loops.
We will label the vertex set $\cv(\gg)$ by positive integers $\bn=\{1,2,\ldots\}$,
$\{v\}_{v\in \cv}=\{j\}_{j=1}^\o$, $\o\le\infty$. The symbol $i\sim j$ means that the vertices
$i$ and $j$ are incident, i.e., $\{i,j\}$ belongs to the edge set $\ce(\gg)$.

The degree (valency) of a vertex $v\in\cv(\gg)$ is a number $\g(v)$ of edges emanating from $v$.
A graph $\gg$ is said to be locally finite, if $\g(v)<\infty$ for all $v\in\cv(\gg)$, and
uniformly locally finite, if $\sup_{\cv}\g(v)<\infty$.

The spectral graph theory studies the spectra and spectral properties of certain
matrices related to graphs (more precisely, operators generated by such matrices in the standard
basis $\{e_k\}_{k\in\bn}$ and acting on the corresponding Hilbert spaces $\bc^n$ or
$\ell^2=\ell^2(\bn)$). One of the most notable of them is the {\it adjacency matrix} $A(\gg)$
\begin{equation}\label{adjmat}
A(\gg)=\|a_{ij}\|_{ij=1}^\o, \quad
a_{ij}=\left\{
  \begin{array}{ll}
    1, & \{i,j\}\in\ce(\gg); \\
    0, & \hbox{otherwise.}
  \end{array}
\right.
\end{equation}
The corresponding adjacency operator will be denoted by the same symbol. It acts as
\begin{equation}\label{adjop}
A(\gg)\,e_k=\sum_{j\sim k} e_j, \qquad k\in\bn.
\end{equation}
Clearly, $\ag$ is a symmetric, densely-defined linear operator, whose domain is the set of all finite
linear combinations of the basis vectors. The operator $A(\gg)$ is bounded and self-adjoint in $\ell^2$,
as long as the graph $\gg$ is uniformly locally finite.

Whereas the spectral theory of finite graphs is very well established (see, e.g.,
\cite{BrHae, Chung97, CDS80}), the corresponding theory for infinite graphs is in its infancy.
We refer to \cite{M82, MoWo89, SiSz} for the basics of this theory. In contrast to the general
consideration in \cite{MoWo89}, our goal is to compute the spectra of certain
infinite graphs (precisely, the spectra of corresponding adjacency operators) which loosely speaking
can be called ``finite graphs with tails attached to them''. To make the notion precise,
we define first an operation of coupling well known for finite graphs
(see, e.g., \cite[Theorem 2.12]{CDS80}).

\begin{definition}\label{coupl}
Let $\gg_k$, $k=1,2$, be two graphs with no common vertices, with the vertex sets and edge
sets $\cv(\gg_k)$ and $\ce(\gg_k)$, respectively, and let $v_k\in \cv(\gg_k)$. A graph
$\gg=\gg_1+\gg_2$ will be called a {\it coupling by means of the bridge $\{v_1,v_2\}$} if
\begin{equation}\label{defcoup}
\cv(\gg)=\cv(\gg_1)\cup \cv(\gg_2), \qquad \ce(\gg)=\ce(\gg_1)\cup \ce(\gg_2)\cup \{v_1,v_2\}.
\end{equation}
So we join $\gg_2$ to $\gg_1$ by the new edge between $v_2$ and $v_1$.
\end{definition}

\begin{picture}(300, 100)

\put(120, 50){\circle{80}}   \put(114, 44){\Large{$\Gamma_1$}}

\multiput(142, 50) (50,0) {2} {\circle* {4}}
\put(142, 50) {\line(1,0) {48}}
\put(214, 50){\circle{80}}  \put(208, 44){\Large{$\Gamma_2$}}

\put(142, 54) {$v_1$} \put(180, 54) {$v_2$}

\end{picture}

In general, the adjacency matrix $\ag$ takes the form of a block matrix
\begin{equation}\label{adjcoup}
A(\gg)=\begin{bmatrix}
A(\gg_1) & E \\
E & A(\gg_2)
\end{bmatrix}, \qquad
E=\begin{bmatrix}
1 & 0 & 0 & \ldots \\
0 & 0 & 0 & \ldots \\
0 & 0 & 0 & \ldots \\
\vdots & \vdots & \vdots &
\end{bmatrix}.
\end{equation}
If the graph $\gg_1$ is finite, $V(\gg_1)=\{1,2,\ldots,n\}$, and $V(\gg_2)=\{j\}_{j=1}^\o$,
we can with no loss of generality put $v_1=1$, $v_2=n+1$, so the adjacency matrix $A(\gg)$ can
be written as the block matrix

\begin{equation}\label{adjcoupfin}
A(\gg)=\begin{bmatrix}
A(\gg_1) & E_n \\
E_n & A(\gg_2)
\end{bmatrix}, \qquad
E_n=\begin{bmatrix}
1 & 0 & 0 & \ldots \\
0 & 0 & 0 & \ldots \\
\vdots & \vdots & \vdots & \\
0 & 0 & 0 & \ldots &
\end{bmatrix}.
\end{equation}

If $\gg_2=\bp_\infty$, the one-sided infinite path, we can view the coupling
$\gg=\gg_1+\bp_\infty$ as a finite graph with the tail. Now

\begin{equation}\label{freejac}
A(\gg_2)=
J_0:=
\begin{bmatrix}
 0 & 1 & 0 & 0 & \\
 1 & 0 & 1 & 0 & \\
 0 & 1 & 0 & 1 & \\
     & \ddots  & \ddots & \ddots & \ddots
\end{bmatrix}
\end{equation}
is a Jacobi matrix called a {\it discrete Laplacian} or a {\it free Jacobi matrix}.
It is of particular interest in the sequel.

The spectral theory of infinite graphs with one or several rays attached to certain finite graphs
was initiated in \cite{LeNi-dan, LeNi-umzh, Niz14} wherein a number of particular examples of
graphs was examined. Our argument is based on a general construction from block operator matrices theory
known as the Schur complement. As a matter of fact, the procedure applies not only to adjacency
matrices, but to both Laplacians on graphs of such type.

The examples in the next section rely heavily on the formula of Schwenk \cite{Schw74}
(see \cite[Problem 2.7.9]{CDS80}) for characteristic polynomials of finite graphs $F$
$$ P(\l,F):=\det(\l I-A(F)). $$

Given a graph $F$ and $V\subset\cv(F)$ we denote by $F\backslash V$ the subgraph of $F$
induced by the vertices of $\cv(F)\backslash V$.

{\bf Theorem} (Schwenk). For a given finite graph $F$ and $v\in\cv(F)$, let $\cc(v)$ denote
the set of all simple cycles $Z$ which contain $v$. Then
\begin{equation*}
P(\l,F)=\l P(\l,F\backslash v)-\sum_{v'\sim v}P(\l,F\backslash\{v',v\})-
2\sum_{Z\in\cc(v)}P(\l,F\backslash Z).
\end{equation*}

\subsection{Schur complement}

Let
\begin{equation}\label{blockoper}
\ca=\begin{bmatrix}
A_{11} & A_{12} \\
A_{21} & A_{22}
\end{bmatrix}
\end{equation}
be a block operator matrix which acts in orthogonal sum $\ch_1\oplus\ch_2$ of two Hilbert spaces.
If $A_{11}$ is invertible, the matrix $\ca$ can be factorized as
\begin{equation}\label{factor1}
\ca=\begin{bmatrix}
I & 0 \\
A_{21}A_{11}^{-1} & I
\end{bmatrix}
\begin{bmatrix}
A_{11} & 0 \\
0 & C_{22}
\end{bmatrix}
\begin{bmatrix}
I & A_{11}^{-1}A_{12} \\
0 & I
\end{bmatrix},
\end{equation}
$I$ is the unity operator in the corresponding Hilbert space.
Similarly, if $A_{22}$ is invertible, one can write
\begin{equation}\label{factor2}
\ca=\begin{bmatrix}
I & A_{12}A_{22}^{-1} \\
0 & I
\end{bmatrix}
\begin{bmatrix}
C_{11} & 0 \\
0 & A_{22}
\end{bmatrix}
\begin{bmatrix}
I & 0 \\
A_{22}^{-1}A_{21} & I
\end{bmatrix}.
\end{equation}
Here
\begin{equation}\label{schurcom}
C_{22}:=A_{22}-A_{21}A_{11}^{-1}A_{12}, \qquad C_{11}:=A_{11}-A_{12}A_{22}^{-1}A_{21}
\end{equation}
are usually referred to as the {\it Schur complements} \cite{Schurcomp}.
Both equalities can be checked by multiplication.

The following result is a direct consequence of formulae \eqref{factor1} and \eqref{factor2}.

\begin{proposition}\label{pro1}
Given a block operator matrix $\ca$ $\eqref{blockoper}$, let $A_{22}$ $(A_{11})$ be invertible.
Then $\ca$ is invertible if and only if so is $C_{11}$ $(C_{22})$.
\end{proposition}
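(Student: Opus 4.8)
The plan is to read off the result directly from the factorizations \eqref{factor1} and \eqref{factor2}, using only that the outer triangular factors are invertible. Consider the case where $A_{22}$ is invertible; the other case is symmetric. From \eqref{factor2} we have
\begin{equation*}
\ca=
\begin{bmatrix} I & A_{12}A_{22}^{-1} \\ 0 & I \end{bmatrix}
\begin{bmatrix} C_{11} & 0 \\ 0 & A_{22} \end{bmatrix}
\begin{bmatrix} I & 0 \\ A_{22}^{-1}A_{21} & I \end{bmatrix},
\end{equation*}
and the first step is to observe that each of the two triangular factors is a bounded bijection of $\ch_1\oplus\ch_2$ with bounded inverse: indeed a block upper- (lower-) triangular operator matrix with identities on the diagonal and a bounded off-diagonal entry $B$ has the explicit two-sided inverse obtained by replacing $B$ with $-B$, as one checks by multiplication (exactly as the paper suggests for \eqref{factor1}–\eqref{factor2} themselves). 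Hence $\ca$ is invertible if and only if the middle factor $\operatorname{diag}(C_{11},A_{22})$ is invertible.

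The second step is to note that a block-diagonal operator $\operatorname{diag}(C_{11},A_{22})$ on $\ch_1\oplus\ch_2$ is invertible precisely when both diagonal blocks are invertible: its inverse, when the blocks are invertible, is $\operatorname{diag}(C_{11}^{-1},A_{22}^{-1})$; conversely if the block-diagonal operator is boundedly invertible then compressing its inverse to each summand (which are reducing subspaces) yields bounded inverses of $C_{11}$ and of $A_{22}$. Since we are assuming $A_{22}$ is invertible, this reduces the invertibility of $\operatorname{diag}(C_{11},A_{22})$ to that of $C_{11}$ alone. Combining the two steps: $\ca$ invertible $\iff$ $\operatorname{diag}(C_{11},A_{22})$ invertible $\iff$ $C_{11}$ invertible. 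The case with $A_{11}$ invertible is handled identically starting from \eqref{factor1}, yielding $\ca$ invertible $\iff$ $C_{22}$ invertible.

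There is no serious obstacle here; the only point requiring a word of care is that ``invertible'' for unbounded or merely densely-defined operators would need qualification, but in the setting of the proposition all the relevant operators ($A_{12},A_{21}$ and the Schur complements) are bounded operators between the Hilbert spaces in question, so ``invertible'' unambiguously means ``bounded bijection with bounded inverse,'' and the product of invertible operators is invertible with the inverse being the product of the inverses in reverse order. Thus the proof is essentially a two-line consequence of the displayed factorizations, and I would present it as such, perhaps spelling out the inverse of one triangular factor once for the reader's convenience.
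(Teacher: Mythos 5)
Your proof is correct and follows exactly the route the paper intends: the paper states Proposition \ref{pro1} as ``a direct consequence of formulae \eqref{factor1} and \eqref{factor2},'' and your argument simply fills in the (routine) details of why the outer triangular factors are invertible and why the block-diagonal middle factor is invertible precisely when both its blocks are. No issues.
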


Note that in the premises of Proposition \ref{pro1} the inverse $\ca^{-1}$ takes the form
\begin{equation*}
\ca^{-1}=\begin{bmatrix}
C_{11}^{-1} & -C_{11}^{-1}A_{12}A_{22}^{-1} \\
-A_{22}^{-1}A_{21}C_{11}^{-1} & A_{22}^{-1}+A_{22}^{-1}A_{21}C_{11}^{-1}A_{12}A_{22}^{-1}
\end{bmatrix}
\end{equation*}
and, respectively,
\begin{equation*}
\ca^{-1}=\begin{bmatrix}
A_{11}^{-1}+A_{11}^{-1}A_{12}C_{22}^{-1}A_{21}A_{11}^{-1} & -A_{11}^{-1}A_{12}C_{22}^{-1} \\
-C_{22}^{-1}A_{21}A_{11}^{-1} & C_{22}^{-1}
\end{bmatrix}.
\end{equation*}

It looks reasonable applying the latter result to
\begin{equation*}
\l I-\ca=\begin{bmatrix}
\l I-A(\gg_1) & -E \\
-E & \l I-A(\gg_2)
\end{bmatrix}, \qquad \l\in\bc,
\end{equation*}
to obtain

\begin{proposition}\label{pro2}
Given a block operator matrix $\ca$ $\eqref{blockoper}$, let $\l\in\rho(A_{22})$
$\bigl(\l\in\rho(A_{11})\bigr)$.
Then $\l\in\rho(\ca)$ if and only if $\l\in\rho(C_{11})$ $\bigl(\l\in\rho(C_{22})\bigr)$.
\end{proposition}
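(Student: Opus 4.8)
The plan is to derive Proposition \ref{pro2} as a direct corollary of Proposition \ref{pro1}. The key observation is that for a fixed $\l\in\bc$, the operator $\l I-\ca$ is itself a block operator matrix acting in $\ch_1\oplus\ch_2$, namely
\begin{equation*}
\l I-\ca=\begin{bmatrix}
\l I-A_{11} & -A_{12} \\
-A_{21} & \l I-A_{22}
\end{bmatrix},
\end{equation*}
so its block entries are $B_{11}=\l I-A_{11}$, $B_{12}=-A_{12}$, $B_{21}=-A_{21}$, $B_{22}=\l I-A_{22}$. First I would note that by definition of the resolvent set, $\l\in\rho(\ca)$ means precisely that $\l I-\ca$ is invertible (boundedly, on all of $\ch_1\oplus\ch_2$), and likewise $\l\in\rho(A_{22})$ means $B_{22}=\l I-A_{22}$ is invertible, and $\l\in\rho(C_{11})$ means $\l I-C_{11}$ is invertible.

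The second step is to compute the Schur complement of $\l I-\ca$ with respect to the invertible block $B_{22}$. Using the formula \eqref{schurcom} applied to the block matrix $\l I-\ca$, the relevant Schur complement is
\begin{equation*}
B_{11}-B_{12}B_{22}^{-1}B_{21}=(\l I-A_{11})-(-A_{12})(\l I-A_{22})^{-1}(-A_{21})
=\l I-\bigl(A_{11}-A_{12}(\l I-A_{22})^{-1}A_{21}\bigr)=\l I-C_{11},
\end{equation*}
where $C_{11}$ is exactly the Schur complement of $\ca$ from \eqref{schurcom} (note the two sign factors cancel). Thus the Schur complement of $\l I-\ca$ is literally $\l I-C_{11}$. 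Now apply Proposition \ref{pro1} to the block matrix $\l I-\ca$ with the hypothesis that $B_{22}=\l I-A_{22}$ is invertible: it gives that $\l I-\ca$ is invertible if and only if $\l I-C_{11}$ is invertible, i.e. $\l\in\rho(\ca)$ if and only if $\l\in\rho(C_{11})$. The parenthetical statement is obtained symmetrically, using the other half of Proposition \ref{pro1} with $B_{11}=\l I-A_{11}$ invertible and the Schur complement $\l I-C_{22}$.

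I do not anticipate a genuine obstacle here, since the argument is purely formal once one recognizes that $\l I-\ca$ is a block matrix to which Proposition \ref{pro1} applies and that its Schur complements are $\l I-C_{11}$ and $\l I-C_{22}$. The only point deserving a word of care is the distinction between mere algebraic invertibility and bounded invertibility on the Hilbert space: one should observe that the triangular factors in \eqref{factor1}--\eqref{factor2} (and their inverses, which are again unipotent triangular with the off-diagonal entry negated) are bounded whenever $A_{22}^{-1}$ (resp. $A_{11}^{-1}$) is bounded and $A_{12},A_{21}$ are bounded, so that invertibility of the middle diagonal factor is equivalent to invertibility of $\ca$ in the operator-norm sense, which is what $\rho(\cdot)$ refers to. In the graph setting all of $A_{12}=A_{21}=E$ are bounded (indeed finite-rank in the case \eqref{adjcoupfin}), so this technical point is automatic.
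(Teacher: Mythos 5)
Your approach is exactly the one the paper intends: the paper itself merely remarks that Proposition \ref{pro1} should be applied to the block matrix $\l I-\ca$, and your argument (including the useful observation that the unipotent triangular factors in \eqref{factor1}--\eqref{factor2} are bounded together with their inverses, so that invertibility is genuinely bounded invertibility) is the full version of that sketch. There is, however, a sign slip in your identification of the Schur complement. The correct computation is
\begin{equation*}
(\l I-A_{11})-(-A_{12})(\l I-A_{22})^{-1}(-A_{21})
=\l I-\bigl(A_{11}+A_{12}(\l I-A_{22})^{-1}A_{21}\bigr),
\end{equation*}
with a \emph{plus} sign inside the parentheses, not $\l I-\bigl(A_{11}-A_{12}(\l I-A_{22})^{-1}A_{21}\bigr)$ as you wrote: the two minus signs on the off-diagonal blocks cancel each other, but the leading minus of the Schur-complement formula survives. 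Relatedly, this operator is not ``exactly the Schur complement of $\ca$ from \eqref{schurcom}'' --- that one involves $A_{22}^{-1}$ rather than $(\l I-A_{22})^{-1}$ --- but is the $\l$-dependent operator $C_{11}(\l)$ that the paper introduces in \eqref{schcom}, and the notation $\l\in\rho(C_{11})$ in the statement is to be read accordingly as ``$C_{11}(\l)$ is boundedly invertible''. With these two corrections the argument goes through verbatim: Proposition \ref{pro1} applied to $\l I-\ca$, whose $(2,2)$-block $\l I-A_{22}$ is invertible by hypothesis, yields that $\l I-\ca$ is boundedly invertible if and only if $C_{11}(\l)$ is, which is the assertion; the parenthetical case is symmetric.
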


Here, as usual, $\rho(T)$ stands for the resolvent set of a linear operator $T$, i.e., the set of
complex numbers $\l$ so that $\l I-T$ is boundedly invertible.

\section{Spectra of infinite graphs with tails}

We apply Proposition \ref{pro2} to adjacency matrices \eqref{adjcoup} and \eqref{adjcoupfin} of couplings
$\gg=\gg_1+\gg_2$. It is clear now that
\begin{equation*}
\l I-\ag=(\l I-A(\gg_1))\oplus (\l I-A(\gg_2))+ \Delta, \qquad \ran(\Delta)=2,
\end{equation*}
so we can gather some information about the spectrum of $\ag$ for arbitrary graphs $\gg_j$, $j=1,2$,
from the general perturbation theory of finite rank (see, e.g., \cite[Section 9.2]{BiSo87}). For instance,
according to the famous result of Kato
\begin{equation*}
\s_{ess}(\ag)=\s_{ess}(A(\gg_1))\cup\s_{ess}(A(\gg_2)),
\end{equation*}
(recall that a number $\l\in\s(T)$ belongs to the essential spectrum $\s_{ess}(T)$ of a
self-adjoint operator $T$ if it is not an eigenvalue of finite multiplicity). Moreover,
$\l$ is the eigenvalue of $\ag$ as soon as $\l$ is the isolated eigenvalue of either $A(\gg_1)$ or
$A(\gg_2)$ of multiplicity at least $3$. The Schur complements \eqref{schurcom} take the form
\begin{equation}\label{schcom}
C_{ii}(\l)=\l I-A(\gg_i)-G(\l,\gg_j)\,E, \qquad i\not=j, \quad i,j=1,2,
\end{equation}
where
\begin{equation}\label{green}
G(\l,\gg_j):=\Bigl((\l I-A(\gg_j))^{-1}\Bigr)_{1,1}\,, \quad \l\in\rho(A(\gg_j)), \quad j=1,2
\end{equation}
is the so-called Green's function of $A(\gg_j)$.

We say that $\l$ is a regular point of the graph $\gg$ if $\l\in\rho(\ag)$.

\begin{theorem}\label{th1}
Let $\gg=\gg_1+\gg_2$ be the coupling of a finite graph $\gg_1$ and an arbitrary graph $\gg_2$, and
let $\l$ be a regular point of $\gg_2$. The point $\l$ belongs to the
spectrum of $\gg$  if and only if it solves the equation
\begin{equation}\label{basiceq}
P(\l,\gg_1)-G(\l,\gg_2)P(\l,\gg_1\backslash v_1)=0.
\end{equation}
\end{theorem}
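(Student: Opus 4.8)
The plan is to combine Proposition \ref{pro2} with a direct analysis of when the Schur complement $C_{11}(\l)$ is invertible, since $C_{11}(\l)$ is a perturbation of $\l I - A(\gg_1)$ by the finite-rank operator $G(\l,\gg_2)E_n$, and $\gg_1$ is finite so everything lives on $\bc^n$. First I would fix $\l$ regular for $\gg_2$, so that by \eqref{green} the Green's function $G(\l,\gg_2)$ is well defined and $C_{11}(\l) = \l I - A(\gg_1) - G(\l,\gg_2)\,E_n$ makes sense as an $n\times n$ matrix; by Proposition \ref{pro2}, $\l\in\s(\ag)$ if and only if $C_{11}(\l)$ is singular, i.e. $\det C_{11}(\l)=0$. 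So the whole theorem reduces to showing
\begin{equation*}
\det\bigl(\l I-A(\gg_1)-G(\l,\gg_2)\,E_n\bigr)=P(\l,\gg_1)-G(\l,\gg_2)\,P(\l,\gg_1\backslash v_1).
\end{equation*}

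The key computational step is that $E_n$ is the rank-one matrix $e_1 e_1^{\,*}$ (the vertex $v_1$ being labelled $1$ in $\gg_1$), so $C_{11}(\l) = (\l I - A(\gg_1)) - G(\l,\gg_2)\,e_1 e_1^{\,*}$ is a rank-one perturbation of $M(\l):=\l I - A(\gg_1)$. I would invoke the matrix determinant lemma (or simply expand the determinant along the first row/column): for any square matrix $M$ and scalar $t$,
\begin{equation*}
\det\bigl(M - t\,e_1 e_1^{\,*}\bigr)=\det M - t\cdot M_{11}^{\mathrm{cof}},
\end{equation*}
where $M_{11}^{\mathrm{cof}}$ is the $(1,1)$ cofactor of $M$, i.e. the minor obtained by deleting the first row and column. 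Applying this with $M=\l I-A(\gg_1)$ and $t=G(\l,\gg_2)$ gives $\det C_{11}(\l)=\det(\l I-A(\gg_1)) - G(\l,\gg_2)\det\bigl((\l I-A(\gg_1))\,\widehat{}\,\bigr)$, where the hat denotes deletion of the row and column indexed by $v_1=1$. By definition $\det(\l I-A(\gg_1))=P(\l,\gg_1)$, and the deleted-row-and-column matrix is exactly $\l I - A(\gg_1\backslash v_1)$, since removing the vertex $v_1$ from the graph corresponds precisely to deleting that row and column from the adjacency matrix; hence its determinant is $P(\l,\gg_1\backslash v_1)$. This yields \eqref{basiceq}.

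I would then assemble the argument: $\l$ regular for $\gg_2$ $\Rightarrow$ $C_{11}(\l)$ is defined; $\l\in\rho(\ag)$ $\iff$ $\l\in\rho(C_{11}(\l))$ by Proposition \ref{pro2} (noting $C_{11}(\l)$ acts on the finite-dimensional space $\bc^n$, so invertibility is equivalent to nonvanishing determinant, and $\rho(C_{11}(\l))$ here just means $0\notin\s(C_{11}(\l))$ in the sense that $C_{11}(\l)$ itself is invertible — I should state this carefully since $C_{11}(\l)$ already incorporates the shift by $\l$); and $\det C_{11}(\l)=P(\l,\gg_1)-G(\l,\gg_2)P(\l,\gg_1\backslash v_1)$ by the cofactor computation. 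Therefore $\l\in\s(\ag)$ iff the right-hand side vanishes. The main obstacle — really the only subtle point — is bookkeeping: making sure the roles of $A_{11}=A(\gg_1)$ versus $A_{22}=A(\gg_2)$ and of $C_{11}$ versus $C_{22}$ are matched up correctly with the hypothesis ``$\l$ regular for $\gg_2$'' (so it is $A_{22}=A(\gg_2)$ that must be invertible after shifting, forcing us to use $C_{11}$, i.e. \eqref{schcom} with $i=1$, $j=2$), and confirming that the entry $E_n$ in position $(1,1)$ of the off-diagonal block translates, after the Schur reduction, into the rank-one term $G(\l,\gg_2)e_1e_1^{\,*}$ rather than something acting on the wrong coordinate. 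Once the indices are pinned down, the determinant identity is a one-line cofactor expansion and the proof is complete.
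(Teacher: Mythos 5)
Your proposal is correct and follows essentially the same route as the paper: reduce via Proposition \ref{pro2} to the invertibility of the finite matrix $C_{11}(\l)=\l I-A(\gg_1)-G(\l,\gg_2)E$, then evaluate $\det C_{11}(\l)$ by expanding along the first row (your matrix determinant lemma is exactly that cofactor expansion), identifying the $(1,1)$ minor of $\l I-A(\gg_1)$ with $P(\l,\gg_1\backslash v_1)$. Your extra care about which Schur complement to use and what ``$\l\in\rho(C_{11})$'' means in this shifted setting is a welcome clarification of points the paper leaves implicit, but the argument is the same.
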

\begin{proof}
By Proposition \ref{pro2} and \eqref{schcom}, $\l\in\s(A(\gg))$ if and only if
\begin{equation*}
\det C_{11}(\l)=\det\bigl((\l I-A(\gg_1)-G(\l,\gg_2)\,E\bigr)=0, \quad
E=\begin{bmatrix}
1 & 0 &  \ldots & 0 \\
0 & 0 &  \ldots & 0 \\
\vdots & \vdots &  & \vdots \\
0 & 0 &  \ldots & 0
\end{bmatrix}.
\end{equation*}
The result follows by expanding the latter determinant along the first row.
\end{proof}

The basic example for us is $\gg_2=\bp_\infty$. In this case
\begin{equation*}
\s(\gg)=\s_{ess}(\gg)\cup\s_d(\gg)=[-2,2]\cup\s_d(\gg),
\end{equation*}
where the discrete spectrum $\s_d(\gg)$ is the set of eigenvalues of finite
multiplicity off $[-2,2]$.  Theorem \ref{th1} can be applied, and the Green's function is
known explicitly (see, e.g., \cite{KiSi03})
\begin{equation}\label{resfree}
\Bigl(\l I-A(\bp_\infty)\Bigr)^{-1}=\|r_{ij}(z)\|_{i,j=1}^\infty, \ \
r_{ij}(z)=\frac{z^{i+j}-z^{|i-j|}}{z-z^{-1}}\,,  \ \ \l=z+\frac1{z}\,,
\end{equation}
$|z|<1$, and so
\begin{equation}\label{greenfree}
G(\l,\bp_\infty)=r_{11}(z)=z.
\end{equation}
The discrete spectrum agrees with the zero set of algebraic equation \eqref{basiceq}
\begin{equation}\label{algeq}
\l\in\s_d(\gg) \Leftrightarrow P(\l,\gg_1)-x P(\l,\gg_1\backslash v_1)=0, \ \ \l=x+\frac1{x}\,,
\ \ x\in(-1,1).
\end{equation}
So the problem amounts to computation of two characteristic
polynomials and solving \eqref{algeq}.

\begin{example} ``A multiple star''.

Let $\kappa:=(k_1,k_2,\ldots,k_n)$, $k_j\in\bn$ be an $n$-tuple of positive integers.
Denote by $S(\kappa)$ the graph obtained from the standard star graph $K_{1,n}$ with
$n\ge2$ edges by inserting $k_j-1$ new vertices into $j$'s edge, so this edge contains $k_j+1$
vertices altogether. Put $\gg_1=S(\kappa)$ and consider the coupling $\gg=\gg_1+\bp_\infty$
obtained from $S(\kappa)$ by attaching the infinite path to its root $v_1$.

Denote by $Q(\cdot,m)$ the characteristic polynomial of the finite path $\bp_m$ with $m$ vertices, so
$$ Q(\l,m)=U_m\Bigl(\frac{\l}2\Bigr), \qquad U_m(\cos t):=\frac{\sin(m+1)t}{\sin t} $$
is the standard Chebyshev polynomial of the second kind.

To compute the characteristic polynomial of $\gg_1$ we apply the result of Schwenk which now looks
$$ P(\l,\gg_1)=\l\,Q(\l)-Q(\l)\,\sum_{j=1}^n \frac{Q(\l,k_j-1)}{Q(\l,k_j)},
\quad Q(\l):=\prod_{j=1}^n Q(\l,k_j). $$
It is clear that
$$ P(\l,\gg_1\backslash v_1)=Q(\l), $$
so the equation in \eqref{algeq} takes the form
$$ Q(\l)\,\Bigl\{\frac1{x}-\sum_{j=1}^n \frac{Q(\l,k_j-1)}{Q(\l,k_j)}\Bigr\}=0, \quad \l=x+\frac1{x}\,. $$
Since $Q\not=0$ off $[-2,2]$ we come to
$$ \sum_{j=1}^n \frac{Q(\l,k_j-1)}{Q(\l,k_j)}-\frac1{x}=0, \quad \l=x+\frac1{x}\,, \quad -1<x<1. $$
The function in the left-hand side is odd (as a function of $x$), so we can restrict ourselves
with the values $0<x<1$. Putting $x=e^{-t}$, $t>0$, we obtain after a bit of calculation
\begin{equation}\label{chareq}
\p(t):=\sum_{j=1}^n\frac{\sinh k_j t}{\sinh(k_j+1)t}=e^t, \qquad t>0.
\end{equation}

Since the function
$$ \frac{\sinh at}{\sinh bt}, \qquad 0<a<b, \quad t>0 $$
is easily seen to be monotone decreasing (it follows, e.g., from the infinite product expansion
of $\sinh z$), then so is $\p$ in the left-hand side of \eqref{chareq}. Moreover, $\p$ vanishes
at infinity. Next,
$$ \lim_{t\to 0} \p(t)=\sum_{j=1}^n\frac{k_j}{k_j+1}>1 $$
(we discard the trivial configuration $n=2$, $k_1=k_2=1$). So \eqref{chareq} has a unique
solution $t_+>0$. Finally, the discrete spectrum is
\begin{equation}\label{disspec1}
\s_d(\gg)=\pm\l_+, \qquad \l_+:=2\cosh t_+.
\end{equation}

In particular case $k_j=p$, $1\le j\le n$, equation \eqref{chareq} looks
$$ n\,\frac{\sinh pt}{\sinh (p+1)t}=e^t, \quad n\bigl(e^{pt}-e^{-pt}\bigr)=e^{(p+2)t}-e^{-pt}\,, $$
which is equivalent to
$$ (n-1)x^{2p+2}-nx^2+1=0. $$

Note that in this case the more detailed description of the spectrum is available,
see \cite[Example 2.3]{Gol1}. Precisely, there are $p$ eigenvalues lying on the essential
spectrum $[-2,2]$.
\end{example}

\begin{example} ``A flower with $n$ petals''.

In this example $\gg_1$ is composed of $n$ cycles $\{\bc_j\}_{j=1}^n$, glued together at one
common vertex (root) $\co$. Put $\gg=\gg_1+\bp_\infty$ with the infinite path attached to the root $\co$.
Assume that the cycle $\bc_j$ contains $k_j+1$ vertices.

To compute the characteristic polynomial of $\gg_1$ we apply again the Schwenk theorem which gives
$$ P(\l,\gg_1)=Q(\l)\,\Bigl\{\l-2\sum_{j=1}^n \frac{Q(\l,k_j-1)+1}{Q(\l,k_j)}\Bigr\}.  $$
As in the above example, $P(\cdot,\gg_1\backslash\co)=Q$, and we come to the following equation
$$ 2\sum_{j=1}^n \frac{Q(\l,k_j-1)+1}{Q(\l,k_j)}-\frac1{x}=0, \quad \l=x+\frac1{x}\,, \quad -1<x<1. $$

Putting $x=e^{-t}$, $t>0$, we obtain
\begin{equation}\label{chareq1}
2\sum_{j=1}^n\frac{\sinh k_j t}{\sinh(k_j+1)t}=e^t, \qquad t>0.
\end{equation}
The same argument as above shows that \eqref{chareq1} has a unique solution $t_+>0$, so the point
$\l_+=2\cosh t_+\in\s_d(\gg)$ for {\it all} configurations in $\gg_1$ (with no exceptions).
Putting $x=-e^{-t}$, $t>0$, we come to
\begin{equation}\label{chareq2}
\p(t):=2\sum_{j=1}^n\frac{\sinh k_j t+(-1)^{k_j+1}\,\sinh t}{\sinh(k_j+1)t}=e^t, \qquad t>0.
\end{equation}
Since
$$ \frac{\sinh mt-\sinh t}{\sinh (m+1)t}=\frac{\sinh\frac{m-1}2 t}{\sinh\frac{m+1}2 t}\,, $$
$\p$ is a monotone decreasing function vanishing at infinity, and $\p(0+)>1$ for all configurations
in $\gg_1$. Hence \eqref{chareq2} has a unique solution $t_->0$, and $\l_-=-2\cosh t_-\in\s_d(\gg)$.
Finally, the discrete spectrum is
\begin{equation}\label{disspec2}
\s_d(\gg)=\l_{\pm}, \qquad \l_{\pm}:=\pm 2\cosh t_{\pm}.
\end{equation}

Note that in particular case $n=2$, $k_1=k_2$ (the propeller with equal blades), a complete
description of the spectrum is given in \cite[Example 3.4]{Gol1}.

\end{example}

\end{document}